   \DeclareMathOperator{\N}{\mathbb{N}}
   \DeclareMathOperator{\R}{\mathbb{R}}
   \DeclareMathOperator{\C}{\mathbb{C}}
   \DeclareMathOperator{\supp}{supp}
   \DeclareMathOperator{\om}{\mathcal{O}_M}
   \DeclareMathOperator{\oc}{\mathcal{O}_C}
   \DeclareMathOperator{\ptp}{\hat{\otimes}_{\pi}}
   \newcommand{\oms}{\mathcal{O}_{M}'}
   \newcommand{\ocs}{\mathcal{O}_{C}'}
   \newcommand{\XX}{\mathscr X}
   \DeclareMathOperator{\Proj}{Proj}
\newtheorem{proposition}{Proposition}
\begin{document}


\keywords{derived projective limit functor $\cdot$ slowly increasing functions $\cdot$ rapidly decreasing distributions $\cdot$ sequence space representations}
\subjclass[2000]{Primary 46M18 $\cdot$ 46F05 ; Secondary 46A08 $\cdot$ 46A45}
\title[Bornologicity of $\mathcal{O}_M$]{A new proof for the bornologicity of the space of slowly increasing functions}
\author{Julian Larcher}
\address{Institut für Mathematik, Universität Innsbruck, A-6020 Innsbruck, Austria}
\email{julian.larcher@uibk.ac.at}
\author{Jochen Wengenroth}
\address{Universität Trier, FB IV -- Mathematik, 54286 Trier, Germany
}
\email{wengenroth@uni-trier.de}

\begin{abstract}
A. Grothendieck proved at the end of his thesis that the space $\om$ of slowly increasing functions and the space $\ocs$ of rapidly decreasing distributions are bornological. 
Grothendieck's proof relies on the isomorphy of these spaces to a sequence space and we present the first proof that does not utilize this fact by using homological methods 
and, in particular, the derived projective limit functor.
\end{abstract}

\maketitle


\section{Introduction and notation}

In \cite[p.~243]{SCHW} L. Schwartz introduced the space of multipliers of temperate distributions, i.e., 
the space of slowly increasing functions 
\[\om=\{f\in\mathcal{C^\infty}(\R^d)\,;\, \forall \alpha\in\N_0^d\,\exists N\in\N:\, \langle x\rangle^{-N}\partial^\alpha f\in L^\infty\},\]
where $\mathcal{C^\infty}(\R^d)$ 
is the space of complex valued, infinitely differentiable functions on $\R^d$, $\langle x \rangle=1+|x|^2$, $\partial^\alpha$ is the partial derivative, and 
$L^\infty$ is the Lebesgue space of bounded functions. 
The dual $\oms$ of $\om$ is the space of very rapidly decreasing distributions.
\par 
Schwartz also introduced the space of convolutors of temperate distributions, i.e., the space $\ocs$ of rapidly decreasing distributions, 
which is the dual of the space
\[\oc=\{f\in\mathcal{C}^\infty(\R^d)\,;\, \exists N \,\forall \alpha\in\N_0^d:\, \langle x\rangle^{-N}\partial^\alpha f\in L^\infty\}\]
of very slowly increasing functions. These spaces are related as in the diagram
\begin{large}
\begin{center}
\begin{tabular}{c c c c c c c c c c c c c}
$\oc$ & $\subseteq$ &$\om$  \\
    \rotatebox[origin=c]{90}{$\cong$}  & & \rotatebox[origin=c]{90}{$\cong$} \\
$\oms$& $\subseteq$&$\ocs$
\end{tabular}
\end{center}
\end{large}
\vspace{8pt}
where in both cases the Fourier transform can be taken as the isomorphism. \\\par 
It is comparatively easy to see that the four spaces are nuclear and semi-reflexive, that $\om$ and $\ocs$ are complete and that $\oc$ and $\oms$ are 
(LF)-spaces and hence bornological. But the completeness of $\oc$ and $\oms$ and the bornologicity of $\om$ and $\ocs$ are not trivial (which was even 
asserted by Grothendieck, \cite[Chap.~II, p.~130]{GRO}). Since the dual of a bornological space is complete and the dual of a complete nuclear space is bornological, 
these two problems are equivalent (for the definitions of these topological properties and relations between them see \cite[Section 424]{EDM}). \par
Grothendieck proved that $\om$ is bornological by showing that it is isomorphic to a complemented subspace of the sequence space 
$s\ptp s'$ \cite[Chap.~II, Lemme 18, p.\,132]{GRO} and verified ``directly'' that the space $s\ptp s'$ is bornological \cite[Chap.II, Prop.~15, p.\,125, Cor.~2, p.~128]{GRO}. 
We will find out more about this isomorphy in Section \ref{sec2} and also give a homological proof of the bornologicity of $s\ptp s'$. \\\par
In \cite{KUC85}, J. Ku\v{c}era claimed to have presented a new (and simple) proof for the main properties of the space $\om$. 
That Ku\v{c}era's proof contains severe mistakes and that it is based on incorrect propositions is clarified in \cite{LAR12}, where also the lack of a proof of 
the bornologicity of $\om$, that does not use the isomorphy $\om\cong s\ptp s'$, is pointed out. In Section \ref{sec3} we will give such a proof.


\section{Projective limits and the space $s\ptp s'$}\label{sec2}

Since quotients (and, in particular, complemented subspaces) of bornological spaces are bornological, it was sufficient for Grothendieck to prove that $\om$ is isomorphic to a 
complemented subspace of $s\ptp s'$, where $s$ is the space of rapidly decreasing sequences
\[ s=\{(x_j)_{j\in\N}\in \C^{\N}\,;\, \forall k: \sup_{j\in\N} j^k |x_j|<\infty\} \]
and $s'$ is its dual, the space of slowly increasing sequences 
\[s'=\{(x_j)_{j\in\N}\in \C^{\N}\,;\, \exists k: \sup_{j\in\N} j^{-k} |x_j|<\infty\}.\]
By $s{\otimes}_\pi s'$ we denote the completed projective tensor product of these spaces. E.g., by \cite[Remark~1, p.~321]{BAR12a}, this space $s\ptp s'$ is canonically isomorphic to
\[
s\ptp s'\cong \{x\in\C^{\N\times\N}\,;\, \forall n \,\exists N :\,  \sup_{i,j}i^n j^{-N} |x_{i,j}|<\infty\}.
\]
In \cite{VAL}, M.\ Valdivia proved that $\om$ is even isomorphic to $s\ptp s'$ itself which answered a question 
posed in \cite[Chap.~II, p.~134]{GRO}. C.\ Bargetz used this fact, the bornologicity of $s\ptp s'$, and methods of the theory 
of topological tensor products to obtain the isomorphy $\oc\cong s\hat{\otimes}_\iota s'$ \cite[Prop.~1, p.~318]{BAR12a}.
\\\par
The descriptions of the spaces $\om$ and $s\hat{\otimes}_\pi s'$ already indicate how they can be written as projective limits of 
LB-spaces (countable inductive limits of Banach spaces)
\begin{gather}\label{om}
\om= \bigcap_{n\in\N} X_{n}=\bigcap_{n\in\N} \bigcup_{N\in\N} X_{n,N},\\\label{s}
s\hat{\otimes}_\pi s'=\bigcap_{n\in\N} Y_{n}=\bigcap_{n\in\N} \bigcup_{N\in\N} Y_{n,N},
\end{gather}
where $X_{n,N}$ and $Y_{n,N}$ are the Banach spaces
\begin{gather*}
X_{n,N}=\{f\in\mathcal{C}^n(\R^d)\,;\,\Vert f\Vert_{n,N}=\sup_{x\in\R^d,|\alpha|\leq n}\langle x \rangle^{-N}| \partial^\alpha f(x)|<\infty\},\\
Y_{n,N}=\{x\in\C^{\N\times\N}\,;\, \| x\|_{n,N}=\sup_{i,j}i^n j^{-N} |x_{i,j}|<\infty
\}.
\end{gather*}
These representations as projective limits of LB-spaces are not only natural but also extremely useful since there are very good criteria for checking bornologicity.
They are related to the derived projective limit functor $\text{Proj}^1 \XX$ 
(which can be defined as the cokernel of the map $\prod X_n \to \prod X_n$, $(x_n)_n\mapsto (x_n-\varrho_{n+1}^n(x_{n+1}))_n$ 
where $\varrho_m^n$ are the connecting maps of the projective spectrum $\XX$, in our cases, $\varrho_m^n$ are just inclusions).
Indeed, an unbublished theorem of D.\ Vogt (his proof reproduced in \cite[Th.\ 3.3.4]{WLN}) says that $\Proj \XX$ is bornological
whenever $\Proj^1 \XX=0$. Moreover, there is a variety of evaluable conditions ensuring $\Proj^1\XX=0$. We are going to apply the following results of
Palamodov-Retakh \cite{Pal} and the second named author, respectively:
\begin{quote}
  A specturm $\XX$ of LB-spaces satisfies $\Proj^1\XX=0$ if and only if there are Banach discs $D_n$ in $X_n$ with $\varrho_m^n(D_m)\subseteq D_n$ and
  \[ \forall\,  n\in\N \;\exists\, m\ge n\; \forall \, k\ge m:\; \varrho_m^n(X_m)\subseteq \varrho_k^n(X_k) + D_n.\]
\end{quote}
The requirement $\varrho_m^n(D_m)\subseteq D_n$ is sometimes very easy to fulfil but in many cases it is very inconvenient. It can be omitted if either all steps $X_n$
are LS-spaces (i.e., the inclusions $X_{n,N}\hookrightarrow X_{n,N+1}$ are compact) or if a slightly stronger condition of Palamodov-Retakh type is required. Denoting by 
$\varrho_\infty^n: \Proj\XX \to X_n$ the obvious map we have:
\begin{quote}
  A specturm $\XX$ of LB-spaces satisfies $\Proj^1\XX=0$ if and only if, for every $n\in\N$, there are a Banach discs $D_n$ in $X_n$ and $m\ge n$ with 
  \[ \varrho_m^n(X_m)\subseteq \varrho_\infty^n(\Proj \XX) + D_n.\]
\end{quote}
We refer to \cite{WLN} for the proofs of these characterization and much more information about derived functors.
Typically, the decompositions required in conditions of Retakh-Palamodov type are quite easy to produce in the case of spaces of sequences (or matrices) since one can write
$x= \chi\, x + (1-\chi)x$ where $\chi$ is the indicator function of a suitably chosen set. We want to exemplify this by giving a very short proof for the bornologicity of
$s\hat{\otimes}_\pi s'$ 
(which is similar to 
Vogt's proof of $\text{Ext}^1(s,s)=0$ \cite[Lemma 2.1, p.~359]{VOG83}).

\begin{proposition}\label{prop:s}
The space $s\hat{\otimes}_\pi s'$ is bornological.
\end{proposition}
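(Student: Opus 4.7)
The plan is to apply the second of the two Palamodov--Retakh-type criteria stated above to the spectrum $\mathscr Y=(Y_n)_n$ with $Y_n=\bigcup_N Y_{n,N}$, since the symmetric compatibility $\varrho_m^n(D_m)\subseteq D_n$ is awkward to arrange here. Fix $n\in\N$, set $m=n+1$, and take as Banach disc the unit ball of the Banach step $Y_{n,0}$,
\[ D_n=\{d\in\C^{\N\times\N}\,;\,\sup_{i,j}i^n|d_{i,j}|\le 1\},\]
which is indeed a Banach disc in $Y_n$ since $Y_{n,0}\hookrightarrow Y_n$ continuously. Our task is then to prove $\varrho_m^n(Y_m)\subseteq s\ptp s'+D_n$, decomposing each $y\in Y_m$ as $y=\chi y+(1-\chi)y$ with a characteristic function $\chi$ allowed to depend on $y$.

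To this end, given $y\in Y_m$, pick $N$ with $y\in Y_{m,N}$ and set $R=\|y\|_{m,N}$, so that $|y_{i,j}|\le R\,i^{-(n+1)}j^{N}$. Let $\chi_{i,j}=1$ if $i\le Rj^{N}$ and $\chi_{i,j}=0$ otherwise. On the complement $\{i>Rj^{N}\}$ we have
\[ i^{n}|y_{i,j}|\le R\,i^{-1}j^{N}<R\,(Rj^{N})^{-1}j^{N}=1,\]
so $(1-\chi)y\in D_n$. On the support $\{i\le Rj^{N}\}$, using also $i^{-(n+1)}\le 1$, for every $n'\in\N$ and every $N'$,
\[ i^{n'}j^{-N'}|y_{i,j}|\le (Rj^{N})^{n'}\,R\,j^{N-N'}=R^{n'+1}\,j^{(n'+1)N-N'},\]
whose supremum is finite whenever $N'\ge(n'+1)N$. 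Thus $\chi y\in Y_{n',N'}$ for every $n'$, i.e.\ $\chi y\in\bigcap_{n'}Y_{n'}=s\ptp s'$. This is the desired decomposition, so $\Proj^1\mathscr Y=0$ and Vogt's theorem yields the bornologicity.

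The substantive point, which forces the use of the stronger of the two criteria quoted above, is that the cut-off threshold $Rj^{N}$ must depend both on the norm of $y$ and on the step $Y_{m,N}$ in which it lies, while the receiving disc $D_n$ has to be a single, $y$-independent set; a $y$-independent combinatorial cut such as $\mathbf 1_{i\le j}$ fails because $N$ is allowed to grow with $y$ and would destroy any uniform bound on $(1-\chi)y$ in a fixed Banach disc.
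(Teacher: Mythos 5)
Your proof is correct and takes essentially the same route as the paper: the identical cut-off decomposition along the threshold $i\lessgtr Rj^{N}$ (with $R=\|y\|_{m,N}$), the same disc $D_n$ = unit ball of $Y_{n,0}$, and the same choice $m=n+1$; the only difference is that you verify the second (Retakh-type) criterion by showing the truncated part lies in every $Y_{n'}$, whereas the paper verifies the first criterion by placing it in $Y_{k,M(k-m+1)}$ for each fixed $k\ge m$. Your closing remark that the $y$-dependence of the cut \emph{forces} the stronger criterion is not quite accurate --- the first criterion also permits the decomposition to depend on the element, and the extra requirement $D_{n+1}\subseteq D_n$ is trivially met for these unit balls, which is exactly how the paper proceeds.
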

\begin{proof}
We keep the notation $s\hat{\otimes}_\pi s'\cong\bigcap_{n\in\N} Y_{n}=\bigcap_{n\in\N} \bigcup_{N\in\N} Y_{n,N}$ from above
and we will verify the Palamodov-Retakh condition for the unit balls $D_n$ of $Y_{n,0}$ which trivially satisfy $D_{n+1}\subseteq D_n$. 
For $n\in\N$ we take $m=n+1$ and fix $x\in Y_n$ as well as $k\ge n+1$. Since $x\in Y_{m,M}$ for some $M\in \N$ we have
\[
  \|x\|_{m,M} =\sup_{i,j} i^m j^{-N} |x_{i,j}|=c <\infty.
\]
We set $y_{i,j}=x_{i,j}$ if $i < cj^M$ and $y_{i,j}=0$ else, as well as $z=x-y$.
For $i<cj^m$ we have $z_{i,j}=0$ and for $i\ge cj^M$ we estimate
\[
 i^nj^{-0} |z_{i,j}| = i^m j^{-M} |z_{i,j}|\, j^M/i \le \|x\|_{m,M}/c=1
\]
which proves $z\in D_n$. It remains to show $y\in Y_{k,K}$ for $K$ sufficienly large. Indeed, for $K=M(k-m+1)$ we have $y_{i,j}=0$ if $i\ge cj^M$ and if $i< cj^M$ we estimate
\[
  i^kj^{-K}|y_{i,j}| =i^m j^{-M} |y_{i,j}| i^{k-m}j^{M-K} \le \|x\|_{m,M} c^{k-m} j^{(k-m)M + M- K} = c^{k-m+1}.
\]
This proves $\|y\|_{k,K}<\infty$, as required.
\end{proof}


\section{The new proof}\label{sec3}

Now we want to prove $\text{Proj}^1\XX=0$ for the spectrum $\XX=(X_n)_{n\in\N}$ in (\ref{om}) in order 
to obtain that $\om$ is bornological. 
Splitting up a given function $f\in X_m$  as $f=\chi f+ (1-\chi) f$ with a cut-off function $\chi$ (as in the proof of Proposition \ref{prop:s})
does not work in this case. But we will see how $f$ can be ``split up'' in the following proof of Grothendieck's result.

\begin{proposition} The space $\om$ is bornological.
\end{proposition}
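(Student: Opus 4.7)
The plan is to apply the Palamodov--Retakh criterion recalled in Section~\ref{sec2} to the spectrum $\XX=(X_n)_n$ from~(\ref{om}), taking as $D_n$ the unit ball of the Banach space $X_{n,0}$ (which trivially satisfies $D_m\subseteq D_n$ for $m\ge n$). A hard multiplicative cut-off as in the proof of Proposition~\ref{prop:s} is unavailable here, because it would destroy the $\mathcal{C}^k$-regularity demanded of the ``nice'' summand. Instead I would smoothen the cut-off and simultaneously regularize $f$, splitting $f\in X_m$ with the help of a dyadic partition of unity combined with mollification at scales tuned to the annulus and to $\|f\|_{m,M}$.

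Concretely, for fixed $n\in\N$ I would take $m=n+1$, a smooth dyadic partition of unity $(\chi_j)_{j\ge 0}$ on $\R^d$ with $\chi_0$ supported in $\{|x|\le 2\}$, $\chi_j$ supported in $\{2^{j-1}\le|x|\le 2^{j+1}\}$ for $j\ge 1$, and $|\partial^\alpha\chi_j|\le C_\alpha 2^{-j|\alpha|}$, together with a mollifier $\phi\in\mathcal{C}^\infty_c(\R^d)$ supported in $B(0,1)$ with $\int\phi=1$, writing $\phi_\varepsilon(t)=\varepsilon^{-d}\phi(t/\varepsilon)$. Given $f\in X_m$ with $\|f\|_{m,M}=c<\infty$, I would set $\varepsilon_j=\min(1,c^{-1}2^{-jM})$ and decompose
\[
f=y+z,\quad y=\sum_{j\ge 0}\chi_j\cdot(f*\phi_{\varepsilon_j}),\quad z=\sum_{j\ge 0}\chi_j\cdot(f-f*\phi_{\varepsilon_j}),
\]
which is locally finite and yields a smooth $y$ independent of $k$.

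Two estimates would then have to be verified. First, for $|\alpha|\le n=m-1$ and $x\in\supp\chi_j$, the mean value theorem together with the definition of $\|f\|_{m,M}$ yields $|\partial^\alpha(f-f*\phi_{\varepsilon_j})(x)|\le C\varepsilon_j\cdot c\cdot 2^{jM}\le C'$, so $\|z\|_{n,0}$ is bounded by a constant independent of $f$, placing $z$ in a fixed multiple of $D_n$. Second, expanding $\partial^\beta(\chi_j\cdot (f*\phi_{\varepsilon_j}))$ by the Leibniz rule and transferring derivatives of order above $m$ from $f$ to $\phi_{\varepsilon_j}$ (each such transfer producing an inverse power of $\varepsilon_j$), combined with $|\partial^\gamma f|\le c\langle x\rangle^M$ for $|\gamma|\le m$, would give $|\partial^\beta y(x)|\le C_k\langle x\rangle^{M(k-n)}$ for $|\beta|\le k$. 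Hence $y\in X_{k,M(k-n)}\subseteq X_k$ for every $k\ge m$, the Palamodov--Retakh condition is satisfied, $\Proj^1\XX=0$ follows, and Vogt's theorem gives the bornologicity of $\om=\Proj\XX$.

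The main obstacle will be the bookkeeping in the second estimate: the Leibniz expansion for $\partial^\beta(\chi_j\cdot(f*\phi_{\varepsilon_j}))$ interacts with the transfer of derivatives between $f$ and the mollifier, and one must confirm that the resulting growth rate $\langle x\rangle^{M(k-n)}$ is uniform across all annuli with the stated choice of $\varepsilon_j$. The specific form $\varepsilon_j=c^{-1}2^{-jM}$ is dictated by two competing needs: $\varepsilon_j$ must be small enough so that $\varepsilon_j\cdot c\cdot 2^{jM}$ stays bounded (controlling $z$), but not so small that inverse powers of $\varepsilon_j$ spoil the polynomial growth of $y$.
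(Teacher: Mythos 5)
Your proposal is correct in substance and takes a genuinely different route from the paper's proof. Both arguments regularize $f$ at a position-dependent scale comparable to $(c\langle x\rangle^{M})^{-1}$, but the paper's kernel $K(t,x)$ depends on $x$ through the scale as well as through translation, so mollification does not commute with $\partial_x$; the paper therefore mollifies only the top-order derivative $\partial^{(n,\dots,n)}f$ and recovers $f$ by integrating $n$ times in each variable, which forces a Taylor estimate, boundary terms along coordinate hyperplanes, an induction on the dimension $d$, and the weight loss $N=(d-1)(n-1)$. Your device of freezing the scale on each dyadic annulus makes $f*\phi_{\varepsilon_j}$ an honest convolution commuting with derivatives, so all derivatives of the error up to order $n$ are controlled at once; the derivatives of the $\chi_j$ are uniformly bounded and only finitely many terms overlap at each point, so the argument is dimension-free and gives the cleaner conclusion $X_{n+1}\subseteq \om + C\,B_{n,0}$, i.e.\ $N=0$, which suffices for the second Palamodov--Retakh criterion quoted in Section~\ref{sec2} (and your $y$ indeed lies in $\om$, not merely in $X_k$, by the derivative-transfer estimate $\partial^{\gamma}(f*\phi_{\varepsilon_j})=(\partial^{\gamma'}f)*(\partial^{\gamma''}\phi_{\varepsilon_j})$ with $|\gamma'|=m$, costing $\varepsilon_j^{-(|\gamma|-m)}\lesssim \langle x\rangle^{M(|\gamma|-m)}$). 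Two pieces of bookkeeping need fixing but are harmless: on $\supp\chi_j$ one has $\langle x\rangle\sim 2^{2j}$, so the correct scale is $\varepsilon_j\sim\bigl(C_M\,c\,2^{2jM}\bigr)^{-1}$ rather than $c^{-1}2^{-jM}$, where the constant $C_M$ absorbs $\sup_{|\xi-x|\le 1}\langle\xi\rangle^{M}\le C_M\langle x\rangle^{M}$ and the finitely many Leibniz terms; and the resulting bound $\|z\|_{n,0}\le C$ with $C$ independent of $f$ places $z$ only in a fixed multiple of the unit ball of $X_{n,0}$, which is still a Banach disc, so the criterion applies unchanged. With these adjustments your proof is complete and, arguably, simpler than the one in the paper.
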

\begin{proof}
To obtain $\text{Proj}^1\XX=0$ we will show
\begin{gather}\label{hom}
\forall n\,\exists m,N:\,\,X_{m}\subseteq \om + B_{n,N}
\end{gather}  where $B_{n,N}$ is the unit ball of $X_{n,N}$. 
This condition means that we have to approximate every $f\in X_m$ with respect to the norm $\|\cdot\|_{n,N}$ by elements of $\om$.
To achieve such an approximation we use a kernel $K\in\om(\R^d\times\R^d)$ satisfying
\begin{align*}
& K\ge 0,\; \int_{\R^d} K(t,x)\,dt=1\text{ for all }x\in\R^d,\, \text{ and} \\
&\supp K(\cdot,x)\subseteq \prod\limits_{j=1}^{d}[x_j,x_j+\varepsilon \langle x \rangle^{-\mu}]=:A_x \text{ for all }x\in\R^d
\end{align*}
where we will see later how $\varepsilon$ and $\mu$ have to be chosen in dependence on $f\in X_m$. We can obtain such a kernel by defining 
\[
K(t,x)=\varepsilon^{-d}\langle x \rangle^{\mu d} \varphi(\varepsilon^{-1}\langle x \rangle^\mu(t-x))
\]
for a positive test function $\varphi\in\mathcal{C}^\infty(\R^d)$ with support in $[0,1]^d$ and $\int_{\R^d} \varphi(t) dt=1$ 
(the conditions above can be checked easily and $K\in\om$ since every derivative of $K$ can be estimated by a polynomial). \par
We start with the one-dimensional case $d=1$ where we can take $m=n+1$ and $N=0$. So let $f\in X_{n+1,M}$ for some $M\in \N$. 
We want to find $g\in \om$ such that $f-g\in B_{n,0}$. At first we set
\[
g_n(x)=\int_{\R}f^{(n)}(t) K(t,x)\,dt
\]
and show that this is a good approximation to $f^{(n)}$. Since for $l\in\N_0$ 
\[
\left| g_n^{(l)}(x) \right|=\left|\int_{A_x}f^{(n)}(t) \partial_{x}^{l} K(t,x)\,dt\right|\leq\int_{A_x} |P(t)|\, |Q(t,x)| \,dt\leq |R(x)|
\] for some polynomials $P$, $Q$, $R$, the function $g_n$ is contained in $\om$. Furthermore we can estimate in virtue of Taylor's formula \begin{gather*}
\left| f^{(n)}(t)-f^{(n)}(x)\right|\leq |t-x| \langle \xi(t,x) \rangle^M \Vert f\Vert_{n+1,M}
\end{gather*} with a point $\xi(t,x)$ between $t$ and $x$. For $\varepsilon$ small enough the inequality $\langle \xi(t,x)\rangle\leq 2\langle x \rangle$ holds for every $x\in\R$ and $t\in A_x$. We obtain
\begin{equation}\label{est}
\begin{aligned}
|g_n(x)-f^{(n)}(x)|&=\left|\int_{\R}\left(f^{(n)}(t)-f^{(n)}(x)\right) K(t,x)\,
dt\right|\\
&\leq\int_{A_x}\left|f^{(n)}(t)-f^{(n)}(x)\right| K(t,x)\,dt\\
&\leq \int_{A_x}|t-x|\langle \xi(t,x) \rangle^M \Vert f\Vert_{n+1,M}K(t,x)\,dt\\
&\leq\varepsilon \,2^M\, \langle x\rangle^{M-\mu} \Vert f\Vert_{n+1,M}\int_{A_x} K(t,x)\,dt\\
&=\varepsilon\,2^M\,\langle x\rangle^{M-\mu}\Vert f\Vert_{n+1,M}.
\end{aligned}
\end{equation}
Now if 
\begin{gather*}T:\om(\R)\rightarrow\om(\R), h\mapsto \left(x\mapsto \int_{0}^{x}h(t)\,dt\right),
\end{gather*}
we can set
\[g(x)=\sum_{j=0}^{n-1}\frac{f^{(j)}(0)}{j !}x^j +(T^n g_n)(x).\] 
Then $g\in\om$ and since
\[(T^n f^{(n)})(x)=f(x)-\sum_{j=0}^{n-1}\frac{f^{(j)}(0)}{j !}x^j,\]
integrating (\ref{est}) (the integral starting at 0) yields
\[
| g^{(l)}(x)-f^{(l)}(x)|\leq 1,\,x\in\R^d,l\leq n
\]
for $\varepsilon$ small enough and $\mu$ large enough. Hence $g-f\in B_{n,0}$ and the proof is complete for the one-dimensional case. 
\par
Now we will prove the two-dimensional case  $d=2$. We set $m=2n+1$ and $N=n-1$ in (\ref{hom}). So let $f\in X_{2n+1,M}$ for some $M$. With the help of a kernel 
$K\in\om(\R^2\times \R^2)$ like above, we set \[g_n(x)=\int_{\R^2}\partial^{(n,n)}f(t)K(t,x)\,dt\] in order to approximate 
$\partial^{(n,n)}f$ by $g_n$. Similar to the one-dimensional case we have 
\begin{gather*}
\left| \partial^{(n,n)}f(t)-\partial^{(n,n)}f(x)\right|\leq c\cdot|t-x| \langle \xi(t,x) \rangle^M \Vert f\Vert_{2n+1,M}
\end{gather*} and $\langle \xi(t,x)\rangle\leq 2\langle x \rangle$ for $t\in A_x$ and $\varepsilon$ small enough and thus
\begin{equation}\label{est2}
\begin{aligned}
|g_n(x)-\partial^{(n,n)}f(x)|&\leq c \int_{A_x}|t-x|\langle \xi(t,x) \rangle^M \Vert f\Vert_{2n+1,M}K(t,x)\,dt\\
&\leq \tilde{c}\,\varepsilon\,\langle x\rangle^{M-\mu}\Vert f\Vert_{2n+1,M}.
\end{aligned}
\end{equation}
Let us denote $T_j$ the integral with respect to the $j$-th component (the integral starting at 0). Applying $T_{1}\circ T_{2}$ $n$-times to $\partial^{(n,n)}f(x)$ yields 
\begin{multline*}
(T_{1}^{n}T_{2}^{n}f)(x)=\\f(x)+ \sum_{\alpha<(n,n)}\partial^\alpha f (0,0) \frac{x^\alpha}{\alpha !} - 
\sum_{j=0}^{n-1}\partial^{(j,0)} f(0,x_2) \frac{x_1^j}{j !}- \sum_{j=0}^{n-1}\partial^{(0,j)} f(x_1,0) \frac{x_2^j}{j !}.
\end{multline*}
As in the one-dimensional case we can choose 
$g^1_0,\ldots g^1_{n-1},g^2_0,\ldots,g^2_{n-1}\in\om(\R)$ such that $\Vert g^1_j-\partial^{(0,j)} f(\cdot,0)\Vert_{n,0}
\leq \varepsilon$ and $\Vert g^2_j-\partial^{(j,0)} f(\cdot,0)\Vert_{n,0}\leq \varepsilon$. Defining
\[
g(x)=(T_{1}^{n}T_{2}^{n})g_n(x)- \sum_{\alpha<(n,n)}\partial^\alpha f (0,0) \frac{x^\alpha}{\alpha !} + 
\sum_{j=0}^{n-1}g^2_j(x_2) \frac{x_1^j}{j !}+ \sum_{j=0}^{n-1}g^1_j(x_1) \frac{x_2^j}{j !}
\]
and applying $T_{1}^{n}T_{2}^{n}$ to (\ref{est2}) yields
\begin{multline*}
|g(x)-f(x)|\leq \\\varepsilon +\sum_{j=0}^{n-1}\left( \left|g^1_j(x_1)-\partial^{(0,j)} f(x_1,0)\right| \frac{|x_2|^j}{j !} 
+\left| g^2_j(x_2)-\partial^{(j,0)} f(0,x_2)\right|\frac{|x_1|^j}{j !}\right)
\end{multline*}
for $\mu$ large enough which implies
\begin{align*}
|g(x)-f(x)|\leq \varepsilon +\varepsilon \sum_{j=0}^{n-1}\frac{|x_2|^j}{j !} +\varepsilon \sum_{j=0}^{n-1}\frac{|x_1|^j}{j !}\leq \varepsilon \,c \,\langle x \rangle^{n-1}
\end{align*}
for some $c>1$. Since similar estimates also hold for $|\partial^\alpha g(x)-\partial^\alpha f(x)|$, $|\alpha|\leq n $, we obtain $g-f \in B_{n,n-1}$ and the proof is complete for $d=2$.
\par
The general case $d\in\N$ is very similar. Inductively we want to show $$X_{dn+1}\subseteq \om+B_{n,(d-1)(n-1)}$$ and start by 
approximating $\partial^{(n,\ldots,n)}f$ by $g_n(x):=\int_{R^d}\partial^{(n,\ldots,n)}f(t)K(t,x)\,dt$. 
Then we integrate the estimate of $g_n-\partial^{(n,\ldots,n)}f$ $n$-times with respect to each component.
The integral $T_1^n\cdots T_d^n \partial^{(n,\ldots,n)}f$ contains $f$ as a summand and terms that are the 
product of a derivative of $f$ that only depends on less than $d$ components and a polynomial in less than $d$ components 
with exponents less than $n$. But we can estimate the functions that only depend on less than $d$ variables by the induction 
hypothesis and hence we can obtain $g\in\om$ with $g-f\in B_{n,(d-1)(n-1)}$.
\end{proof}



\end{document}